\title{\sc Functional A Posteriori Error Estimates for Elliptic Problems in Exterior Domains}
\def\shorttitle{A Posteriori Error Estimates in Exterior Domains}
\def\pauthor{Dirk Pauly and Sergey Repin}
\def\mylabelonoff{off}
\def\allowdisbrk{no}
\author{{\sf\pauthor}}
\markboth{\pauthor}{\shorttitle}
\numberwithin{equation}{section}
\newenvironment{acknow}{{\vspace*{1cm}\noindent\bf Acknowledgements }}{}
\newcommand{\bewboxw}{\mbox{}\hfill $\square$ \\}
\newenvironment{proof}{{\noindent\bf Proof }}{\bewboxw}
\newcommand{\keywords}[1]{{\noindent\bf Key Words }#1}
\newcommand{\amsclass}[1]{{\noindent\bf AMS MSC-Classifications }#1}
\newcommand{\mylabel}[1]{\label{#1}\fbox{{\rm #1}}}}{\newcommand{\mylabel}[1]{\label{#1}\makebox[0mm][]{}}}
\newcommand{\paper}[7]{\bibitem{#1} #2, `#3', {\it #4}, #5, (#6), #7.}
\newcommand{\book}[6]{\bibitem{#1} #2, {\it #3}, #4, #5, (#6).}
\newcommand{\rz}{\mathbb{R}}
\newcommand{\rzp}{\rz_{+}}
\newcommand{\rzpm}{\rz_{\pm}}
\newcommand{\rN}{\rz^N}
\newcommand{\dd}{\text{d}}
\newcommand{\intl}{\,\dd\lambda}
\newcommand{\ints}{\,\dd\sigma}
\DeclareMathOperator{\p}{\partial}
\def\na{\nabla}
\renewcommand{\div}{\operatorname{div}}
\newcommand{\eps}{\varepsilon}
\newcommand{\om}{\Omega}
\newcommand{\Omegai}{\om_i}
\newcommand{\Omegae}{\om_e}
\newcommand{\omi}{\om_i}
\newcommand{\ome}{\om_e}
\def\omeps{\om\setminus B_{\eps}}
\newcommand{\trg}{\tau_\gamma}
\newcommand{\trG}{\tau_\Gamma}
\newcommand{\ntrg}{\tau_{n,\gamma}}
\newcommand{\ntrG}{\tau_{n,\Gamma}}
\def\intO{\int_{\om}}
\def\ye{y_e}
\def\yi{y_i}
\def\ug{u_{\gamma}}
\def\uge{\hat{u}}
\def\vge{\hat{v}}
\def\Set#1#2{\left\{#1\,\mid\,#2\right\}}
\DeclareMathOperator{\Lebesgue}{\mathsf{L}}
\newcommand{\Lgen}[2]{\Lebesgue^{#1}_{#2}}
\def\Lo{\Lgen{1}{}}
\def\Lou{\Lo(U_{1})}
\def\Lt{\Lgen{2}{}}
\def\Lts{\Lgen{2}{s}}
\def\Lto{\Lgen{2}{1}}
\def\Ltmo{\Lgen{2}{-1}}
\def\Ltom{\Lt(\om)}
\def\Ltsom{\Lts(\om)}
\def\Ltoom{\Lto(\om)}
\def\Ltmoom{\Ltmo(\om)}
\DeclareMathOperator{\Sobolev}{\mathsf{H}}
\newcommand{\Hgen}[3]{\overset{#3}{\Sobolev}{}^{#1}_{#2}}
\def\Homo{\Hgen{1}{-1}{}}
\def\Homoz{\Hgen{1}{-1}{\circ}}
\def\Hoot{\Hgen{1/2}{}{}}
\def\Hmoot{\Hgen{-1/2}{}{}}
\def\HootG{\Hoot(\Gamma)}
\def\HmootG{\Hmoot(\Gamma)}
\def\Homoom{\Homo(\om)}
\def\Homozom{\Homoz(\om)}
\def\Ho{\Hgen{1}{}{}}
\def\Hoomi{\Ho(\omi)}
\DeclareMathOperator{\Cont}{\mathsf{C}}
\newcommand{\Cgen}[2]{\overset{#2}{\Cont}{}^{#1}}
\def\Ciz{\Cgen{\infty}{\circ}}
\def\Cizom{\Ciz(\om)}
\DeclareMathOperator{\Divergence}{\mathsf{D}}
\newcommand{\Dgen}[1]{\Divergence_{#1}}
\def\D{\Dgen{}}
\def\Dom{\D(\om)}
\def\Domi{\D(\omi)}
\def\Dome{\D(\ome)}
\newcommand{\normdst}{\hspace{-0.4ex}}
\newcommand{\scp}[2]{\left\langle#1,#2\right\rangle}
\newcommand{\scpom}[2]{\scp{#1}{#2}_{\om}}
\newcommand{\scpomeps}[2]{\scp{#1}{#2}_{\omeps}}
\newcommand{\scpsom}[2]{\scp{#1}{#2}_{s,\om}}
\newcommand{\scpoom}[2]{\scp{#1}{#2}_{1,\om}}
\newcommand{\scpmoom}[2]{\scp{#1}{#2}_{-1,\om}}
\newcommand{\scpomi}[2]{\scp{#1}{#2}_{\omi}}
\newcommand{\scpome}[2]{\scp{#1}{#2}_{\ome}}
\newcommand{\scpg}[2]{\scp{#1}{#2}_{\gamma}}
\newcommand{\scpG}[2]{\scp{#1}{#2}_{\Gamma}}
\newcommand{\norm}[1]{\left|\normdst\left|#1\right|\normdst\right|}
\newcommand{\normom}[1]{\norm{#1}_{\om}}
\newcommand{\normomeps}[1]{\norm{#1}_{\omeps}}
\newcommand{\normsom}[1]{\norm{#1}_{s,\om}}
\newcommand{\normoom}[1]{\norm{#1}_{1,\om}}
\newcommand{\normmoom}[1]{\norm{#1}_{-1,\om}}
\newcommand{\normAom}[1]{\norm{#1}_{A,\om}}
\newcommand{\normAmoom}[1]{\norm{#1}_{A^{-1},\om}}
\newcommand{\normomi}[1]{\norm{#1}_{\omi}}
\newcommand{\normoome}[1]{\norm{#1}_{1,\ome}}
\newcommand{\normoomi}[1]{\norm{#1}_{1,\omi}}
\newcommand{\normmoome}[1]{\norm{#1}_{-1,\ome}}
\newtheorem{lem}{Lemma}
\newtheorem{theo}[lem]{Theorem}
\newtheorem{cor}[lem]{Corollary}
\newtheorem{rem}[lem]{Remark}
\newtheorem{pro}[lem]{Proposition}
\begin{document}

\maketitle{}

\begin{abstract}
This paper is concerned with the derivation of computable and guaranteed upper bounds
of the difference between the exact and the approximate solution
of an exterior domain boundary value problem for a linear elliptic equation.
Our analysis is based upon purely functional argumentation
and does not attract specific properties of an approximation method.
Therefore, the estimates derived in the paper at hand are applicable
to any approximate solution that belongs to the corresponding energy space.
Such estimates (also called error majorants of the functional type)
have been derived earlier for problems in bounded domains of $\rN$
(see \cite{repinconvex,RdeGruyter}).\\
\keywords{A posteriori error estimates of functional type,
elliptic boundary value problems in exterior domains}\\
\amsclass{65 N 15}
\end{abstract}

\tableofcontents

\section{Introduction}

The main focus of our investigations is to suggest a method of deriving guaranteed and computable
upper bounds of the difference between the exact solution $u$
of an elliptic exterior domain boundary value problem and any approximation
from the corresponding energy space. We discuss the method with the paradigm
of the prototypical elliptic problem
\begin{align}
-\div A\na u&=f&&\text{in }\Omega,\mylabel{elleq}\\
u|_\gamma&=g&&\text{on }\gamma:=\p\Omega.\mylabel{bdcond}
\end{align}
We assume that $\Omega\subset\rN$ with $N\geq1$
is an exterior domain, i.e. $\rN\setminus\Omega$ is compact,
with Lipschitz continuous boundary $\gamma$
(see Figure \ref{domain}).

\begin{figure}
\centerline{\includegraphics[height=3in]{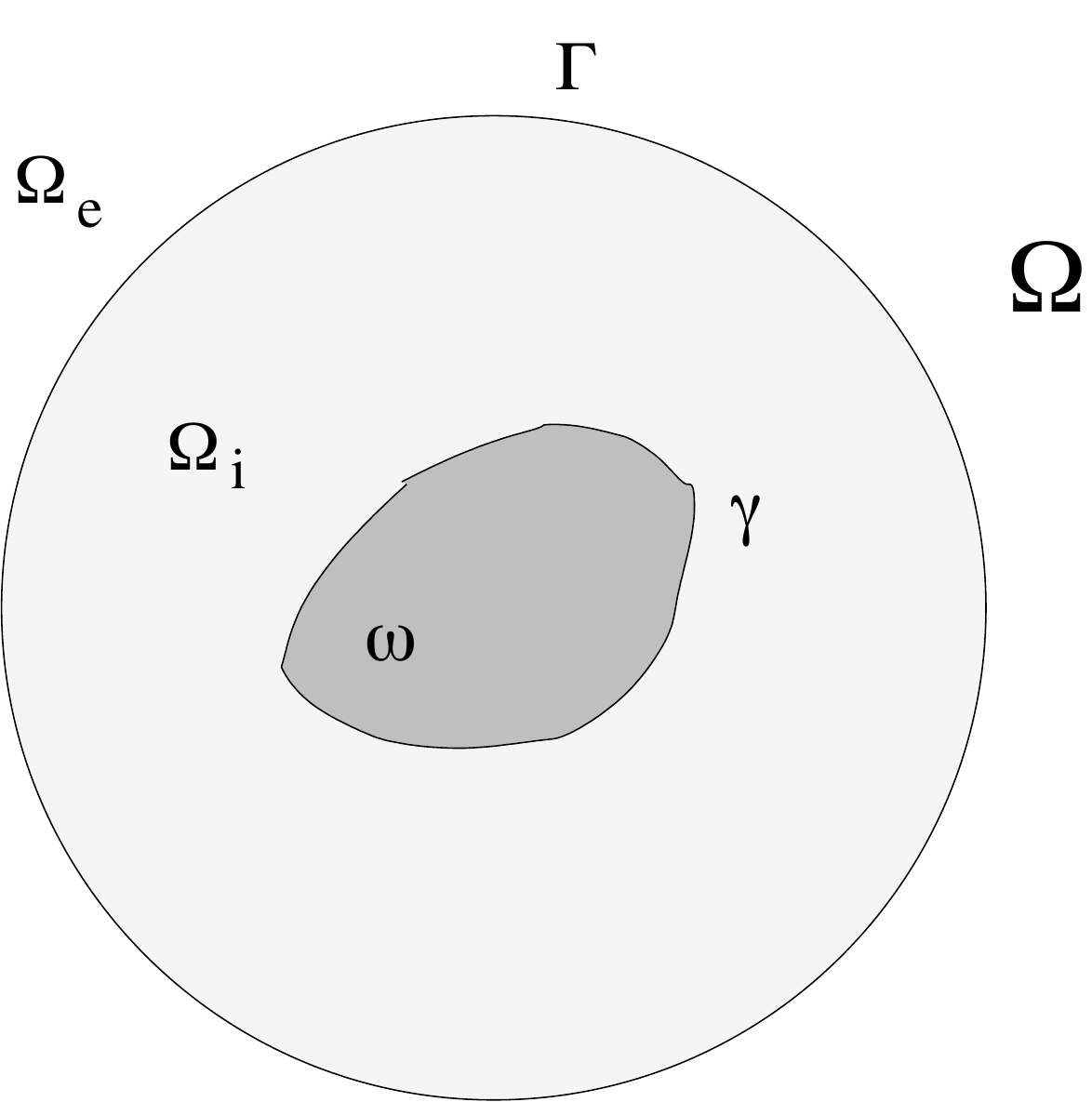}}
\mylabel{domain}
\caption{exterior domain $\Omega$ with artificial interface $\Gamma$}
\end{figure}

Throughout this paper we will use the weighted Lebesgue function spaces
$$\Ltsom:=\Set{\varphi}{\rho^s\varphi\in\Ltom},\quad s\in\rz.$$
Here $\rho:=(1+r^2)^{1/2}$ and $r(x):=|x|$ denotes the radius vector.
$\Ltsom$ is a Hilbert space equipped with the scalar product
$$\scpsom{\varphi}{\psi}:=\scpom{\rho^{s}\varphi}{\rho^{s}\psi}
:=\intO\rho^{2s}\varphi\psi\intl,$$
where $\varphi$ and $\psi$ belong to $\Ltsom$ and $\lambda$ is Lebesgue's measure.
We denote the corresponding norms by $\normsom{\varphi}=\normom{\rho^{s}\varphi}$.
If $s=0$ then $\Ltsom$ coincides with the usual Lebesgue space $\Ltom$.
For the sake of simplicity we keep the same notation
for spaces of vector-valued functions.
Moreover, we introduce the weighted Sobolev space
$$\Homoom:=\Set{\varphi\in\Ltmoom}{\na\varphi\in\Ltom},$$
which is a Hilbert space as well with respect to the scalar product
$$(\varphi,\psi)\mapsto\scpmoom{\varphi}{\psi}+\scpom{\na\varphi}{\na\psi}.$$
By $\Homozom$ we denote the closure of $\Cizom$,
the space of compactly supported smooth test functions,
in the norm of $\Homoom$.
Whenever we consider Sobolev spaces of bounded domains
we use the usual unweighted $\Lt$-scalar products and -norms.

For dimensions $N\geq3$ the solution theory for the problem \eqref{elleq}-\eqref{bdcond}
is based on the weighted Poincare/Friedrich estimate
(see Corollary \ref{poincareestcor} (i) and Remark \ref{poincareestrem} of the appendix)
\begin{align}
\normmoom{\varphi}\leq\frac{2}{N-2}\normom{\na\varphi}\quad
\forall\,\varphi\in\Homozom,\mylabel{Poincare}
\end{align}
the Lax-Milgram theorem and, if needed, an adequate extension operator for the boundary data.
Let $\ug$ be some function in $\Homoom$ satisfying the boundary condition \eqref{bdcond}.
The weak solution $u\in\Homozom+\ug\subset\Homoom$ of \eqref{elleq}-\eqref{bdcond} 
is then defined by the variational formulation
\begin{align}
\scpom{A\na u}{\na w}&=\scpom{f}{w}\quad\forall\,w\in\Homozom.\mylabel{varformu}
\end{align}
By \eqref{Poincare} the left hand side of \eqref{varformu}
is a strongly coercitive sesqui-linear form over $\Homozom$
provided that the real-matrix-valued function $A$ is measurable, bounded a.e.,
symmetric and uniformly strongly elliptic, i.e.
\begin{align}
\exists\,c_A>0\quad\forall\,\xi\in\rN\quad\forall\,x\in\Omega\quad
c_A|\xi|^2\leq A(x)\xi\cdot\xi.\mylabel{cA}
\end{align}
If $f\in\Ltoom$ then by the Cauchy-Scharz inequality the right hand side of \eqref{varformu}
is a linear and continuous functional over $\Homozom$.
Thus, under these assumptions the problem \eqref{varformu}
is uniquely solvable in $\Homozom+\ug$ by Lax-Milgram's theorem.

If $N=1,2$ one can apply the same arguments with the difference
that \eqref{Poincare} has to be modified.
For $N=1$ and, for example, $\Omega\subset\rzp$
we have by Corollary \ref{poincareestcor} (iii) and Remark \ref{poincareestrem}
\begin{align}
\normmoom{\varphi}\leq2\normom{\varphi'}
\quad\forall\,\varphi\in\Homozom.\mylabel{poincareonedim}
\end{align}
Hence, we get the same solution theory with tiny restrictions on $\Omega$,
which easily can be removed by a translation.
For $N=2$ the singularities are stronger
and additionally we have to utilize logarithmic terms.
By Corollary \ref{poincareestcor} (ii) and Remark \ref{poincareestrem} we have
for domains $\Omega\subset\rz^2$,
such that the complement $\rz^2\setminus\Omega$ contains the unit ball,
\begin{align}
\normom{\varphi/(r\ln r)}\leq2\normom{\na\varphi}
\quad\forall\,\varphi\in\Hgen{1}{-1,\ln}{\circ}(\Omega),\mylabel{poincaretwoim}
\end{align}
where
$$\Hgen{1}{-1,\ln}{}(\Omega):=\Set{\varphi}{\varphi/(r\ln r),\na\varphi\in\Ltom}$$
is a Hilbert space equipped with the natural scalar product
$$(\varphi,\psi)\mapsto\scpom{\varphi/(r\ln r)}{\psi/(r\ln r)}+\scpom{\na\varphi}{\na\psi}$$
and again $\Hgen{1}{-1,\ln}{\circ}(\Omega)$ denotes the closure of $\Cizom$
in the norm of $\Hgen{1}{-1,\ln}{}(\Omega)$.
Consequently, we obtain for all $f$ with $r\ln rf\in\Ltom$
and all $\ug$ in $\Hgen{1}{-1,\ln}{}(\Omega)$ satisfying the boundary condition \eqref{bdcond}
a unique solution $u$ belonging to $\Hgen{1}{-1,\ln}{\circ}(\Omega)+\ug$.

We summarize the results in the following

\begin{theo}
\mylabel{solutiontheo}
Let $N\geq3$ as well as $f\in\Ltoom$ and
$\ug\in\Homoom$ satisfying the boundary condition \eqref{bdcond}.
Then the exterior boundary value problem \eqref{elleq}-\eqref{bdcond}
is uniquely weakly solvable in $\Homozom+\ug$. The solution operator is continuous.
\end{theo}

From the above discussion, it is clear that for $N=1,2$ the existence
of weak solutions in suitable spaces can also  be proved.

\begin{rem}
\mylabel{traceextension}
The boundary data $g$ and its extension $\ug$ can be described in more detail.
In the bounded domain case it is well known that there exists a bounded
linear trace operator and a corresponding bounded linear extension operator (right inverse)
mapping $\Hgen{1}{}{}(\Omega)$ to $\Hgen{1/2}{}{}(\gamma)$ and vice verse.
Hence, by restriction we get a bounded  linear trace operator
$$\trg:\Homoom\to\Hoot(\gamma)$$
and by extension and applying an obvious cutting technique we obtain a bounded linear extension operator
$$E:\Hgen{1/2}{}{}(\gamma)\to\Homoom$$
for our exterior domain $\Omega$, which even maps to functions with (arbitrarily thin) compact support.
As in the bounded domain case, $E$ is a right inverse of $\trg$.
Then we may specify $g\in\Hgen{1/2}{}{}(\gamma)$ and $\ug:=Eg\in\Homoom$
as well as our variational formulation for $u=\tilde{u}+Eg$: Find $\tilde{u}\in\Homozom$, such that
$$B(\tilde{u},w):=\scpom{A\na\tilde{u}}{\na w}=\scpom{f}{w}-\scpom{A\na Eg}{\na w}=:F(w)
\quad\forall\,w\in\Homozom.$$
\end{rem}

Finally, we introduce
$$\Dom:=\Set{\varphi\in\Ltom}{\div\varphi\in\Ltoom},$$
which is a Hilbert space with respect to the canonical scalar product
$$(\varphi,\psi)\mapsto\scpom{\varphi}{\psi}+\scpoom{\div\varphi}{\div\psi}.$$

\section{Upper bounds for the deviation from the exact solution in dimensions $N\geq3$}

Let $v$ be an approximation of $u\in\Homozom+\ug\subset\Homoom$,
where $v$ is assumed just to belong to $\Homoom$
since the boundary condition may not be satisfied exactly.
Our goal is to obtain upper bounds for the difference
between $\na u$ and $\na v$ in terms of the norm
$$\normAom{\varphi}:=\normom{A^{1/2}\varphi}=\scpom{A\varphi}{\varphi}^{1/2}.$$
We use \eqref{varformu} and get for all $w\in\Homozom$
\begin{align}
\scpom{A\na(u-v)}{\na w}&=\scpom{f}{w}-\scpom{A\na v}{\na w}.\mylabel{stepone}
\end{align}

Before we proceed we note two useful results.

\begin{theo}
\mylabel{estimatetheo}
Let $u,v\in\Homoom$ be as above.
Moreover, let $\Phi$ be a linear and continuous functional over $\Homozom$
and $c_\Phi>0$, such that for all $w\in\Homozom$
$$\scpom{A\na(u-v)}{\na w}=\Phi(w)\leq c_\Phi\normAom{\na w}$$
holds. Then
\begin{align}
\normAom{\na(u-v)}&\leq c_\Phi+2\normAom{\na(\uge-\vge)}\mylabel{estimatetheoestone}
\intertext{for all $\uge,\vge\in\Homoom$, 
for which $\uge-\vge$ coincides with $u-v$ on the boundary $\gamma$.
If additionally $u-v$ belongs to $\Homozom$ then}
\normAom{\na(u-v)}&\leq c_\Phi.\mylabel{estimatetheoesttwo}
\end{align}
\end{theo}

\begin{proof}
We consider
$$w:=u-v-(\uge-\vge)\in\Homozom.$$
Using Cauchy-Schwarz' inequality we obtain
\begin{align*}
\normAom{\na w}^2
&=\scpom{A\na(u-v)}{\na w}-\scpom{A\na(\uge-\vge)}{\na w}\\
&\leq\left(c_\Phi+\normAom{\na(\uge-\vge)}\right)\normAom{\na w}
\end{align*}
and thus $\normAom{\na w}\leq c_\Phi+\normAom{\na(\uge-\vge)}$.
By the triangle inequality we get \eqref{estimatetheoestone}.
\eqref{estimatetheoesttwo} is trivial since we can set $w:=u-v$,
i.e. $\uge:=\vge:=0$.
\end{proof}

We may be more specific using the trace and extension operators from
Remark \ref{traceextension}.

\begin{cor}
\mylabel{estimatecor}
Let the assumptions of Theorem \ref{estimatetheo} be satisfied. Then
$$\normAom{\na(u-v)}
\leq c_\Phi+2\normAom{\na E(g-\trg v)}
\leq c_\Phi+2c_\gamma\norm{g-\trg v}_{\Hoot(\gamma)}.$$ 
Here $c_\gamma>0$ is the constant in the inequality
\begin{align}
\normAom{\na E\varphi}\leq c_\gamma\norm{\varphi}_{\Hoot(\gamma)}
\quad\forall\varphi\in\Hoot(\gamma).\mylabel{extensionineq}
\end{align}
\end{cor}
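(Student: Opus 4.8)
The plan is to specialize the estimate \eqref{estimatetheoestone} of Theorem \ref{estimatetheo} by making a concrete choice of the pair $\uge,\vge$ adapted to the boundary data, and then to invoke the continuity \eqref{extensionineq} of the extension operator for the second inequality. Throughout I would work with the trace operator $\trg$ and the extension operator $E$ from Remark \ref{traceextension}.

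First I would fix the auxiliary functions. Since $\trg\colon\Homoom\to\Hoot(\gamma)$ is bounded, the trace $\trg v\in\Hoot(\gamma)$ is well defined, and as $g\in\Hoot(\gamma)$ (indeed $g=\trg u$) we have $g-\trg v\in\Hoot(\gamma)$, so that $E(g-\trg v)\in\Homoom$ makes sense. I then set $\uge:=Eg$ and $\vge:=E\trg v$, both belonging to $\Homoom$ because $E$ maps $\Hoot(\gamma)$ into $\Homoom$. Because $E$ is a right inverse of $\trg$, the right-inverse property $\trg E\varphi=\varphi$ yields $\trg\uge=g=\trg u$ and $\trg\vge=\trg v$; hence $\uge$ and $\vge$ coincide with $u$ and $v$ on $\gamma$, respectively, so that $\uge-\vge$ coincides with $u-v$ on $\gamma$, exactly as Theorem \ref{estimatetheo} requires. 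By the linearity of $E$ we moreover have $\uge-\vge=E(g-\trg v)$.

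With this choice the estimate \eqref{estimatetheoestone} reads $\normAom{\na(u-v)}\leq c_\Phi+2\normAom{\na E(g-\trg v)}$, which is precisely the first asserted inequality. The second inequality then follows immediately by applying \eqref{extensionineq} with $\varphi:=g-\trg v$, giving $\normAom{\na E(g-\trg v)}\leq c_\gamma\norm{g-\trg v}_{\Hoot(\gamma)}$, and combining the two bounds.

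I do not expect a genuine obstacle, since the statement is a direct corollary of Theorem \ref{estimatetheo}. The only points demanding care are the well-definedness of $E(g-\trg v)$ and the trace identity $\trg E\varphi=\varphi$, both guaranteed by the mapping properties of $\trg$ and $E$ recorded in Remark \ref{traceextension}; the existence of a finite constant $c_\gamma$ in \eqref{extensionineq} is in turn a consequence of the boundedness of $E\colon\Hoot(\gamma)\to\Homoom$ together with the boundedness of the coefficient matrix $A$, which together control $\normAom{\na E\varphi}$ by $\norm{\varphi}_{\Hoot(\gamma)}$.
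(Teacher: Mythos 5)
Your proposal is correct and follows exactly the paper's own proof: the paper likewise sets $\uge:=Eg$ and $\vge:=E\trg v$, applies \eqref{estimatetheoestone}, and then invokes \eqref{extensionineq}. Your write-up merely makes explicit the details the paper leaves implicit (the right-inverse property $\trg E\varphi=\varphi$, the linearity giving $\uge-\vge=E(g-\trg v)$, and the well-definedness of the traces), all of which are guaranteed by Remark \ref{traceextension}.
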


\begin{proof} 
Setting $\uge:=Eg$ and $\vge:=E\trg v$ as well as using
\eqref{extensionineq} proves the inequalities.
We note that \eqref{estimatetheoesttwo} follows directly from the corollary as well.
\end{proof}

In the subsequent sections we introduce and discuss some different
functionals $\Phi$ and corresponding constants $c_\Phi$.

\subsection{First estimate}

For any $y\in\Dom$ and any $w\in\Homozom$ we have
\begin{align}
\scpom{\div y}{w}+\scpom{y}{\na w}&=0.\mylabel{steptwo}
\end{align}
Combining \eqref{stepone} and \eqref{steptwo} we obtain for all $w\in\Homozom$ and all $y\in\Dom$
\begin{align}
\scpom{A\na(u-v)}{\na w}&=\scpom{f+\div y}{w}+\scpom{y-A\na v}{\na w}=:\Phi(w).\mylabel{stepthree}
\end{align}
By Cauchy-Schwarz' inequality, \eqref{Poincare} with $c_N:=2/(N-2)$ and \eqref{cA}
we estimate the right hand side $\Phi(w)$ of \eqref{stepthree} as follows:
\begin{align}
\left|\scpom{f+\div y}{w}\right|&\leq\normoom{f+\div y}\normmoom{w}
\leq c_N\normoom{f+\div y}\normom{\na w}\mylabel{estimateone}\\
&\leq\frac{c_N}{\sqrt{c_A}}\normoom{f+\div y}\normAom{\na w}\nonumber\\
\left|\scpom{y-A\na v}{\na w}\right|&\leq\normAmoom{y-A\na v}\normAom{\na w}\mylabel{estimatetwo}
\end{align}
By Corollary \ref{estimatecor} we arrive at the following result.

\begin{pro}
\mylabel{estone}
Let $u,v$ be as in Theorem \ref{estimatetheo}. Then
\begin{equation}
\mylabel{estproone}
\normAom{\na(u-v)}\leq\frac{c_N}{\sqrt{c_A}}\normoom{f+\div y}+\normAmoom{y-A\na v}
+2c_\gamma\norm{g-\trg v}_{\Hoot(\gamma)},
\end{equation}
where $y$ is an arbitrary vector field in $\Dom$.
\end{pro}

\begin{rem}
\mylabel{estonerem}
If $v$ satisfies the prescribed boundary condition, then \eqref{estproone} implies
\begin{equation}
\mylabel{estproonem}
\normAom{\na(u-v)}\leq\frac{c_N}{\sqrt{c_A}}\normoom{f+\div y}+\normAmoom{y-A\na v}.
\end{equation}
The estimates \eqref{estproone} and \eqref{estproonem}) show that deviations from 
exact solutions of exterior boundary value problems have the same structure
as for problems in bounded domains, namely they contain weighted residuals
of basic relations with weights given by constants in the corresponding
embedding inequalities.
\end{rem}

\subsection{Second estimate}\mylabel{secondform}

Assume that $\Omega$ is decomposed into two subdomains $\Omegai$ and $\Omegae$
with interface $\Gamma:=\p\Omegae$ (see Figure \ref{domain})
and that the fields $y\in\Dom$ exactly satisfy the relation
\begin{align}
\div y+f=0\quad\text{in }\Omegae.\mylabel{divyome}
\end{align}
In particular, this situation may arise if the source term $f$ has compact support
and $y$ is represented (in the exterior domain $\Omegae$)
as a linear combination of solenoidal fields having proper decay at infinity.
In this case, the estimate of Proposition \ref{estone} turns trivially to
\begin{align}
\normAom{\na(u-v)}\leq c_o\normomi{f+\div y}+\normAmoom{y-A\na v}
+2c_\gamma\norm{g-\trg v}_{\Hoot(\gamma)},\mylabel{aposterioriestimatetwo}
\end{align}
which holds for all $y\in\Dom$ additionally satisfying \eqref{divyome},
where the weight constant is
\begin{equation}
\mylabel{co}
c_o:=\frac{c_N(1+\norm{r}_{\infty,\Omegai})}{\sqrt{c_A}},
\end{equation}
which follows directly from
$$\normoom{f+\div y}=\normoomi{f+\div y}\leq|\rho|_{\infty,\Omegai}\normomi{f+\div y}
\leq(1+|r|_{\infty,\Omegai})\normomi{f+\div y}.$$
But we also may derive another estimate. We rewrite \eqref{estimateone} 
and use Cauchy-Schwarz' inequality in $\Omegai$
\begin{equation}
\left|\scpom{f+\div y}{w}\right|
=\left|\scpomi{f+\div y}{w}\right|\leq\normomi{f+\div y}\normomi{w}
\mylabel{estimatethreeone}
\end{equation}
and estimate
\begin{equation}
\normomi{w}\leq c_{\Omegai}\normomi{\na w}
\leq\frac{c_{\Omegai}}{\sqrt{c_A}}\normAom{\na w}.
\mylabel{estimatethreetwo}
\end{equation}
Here $c_{\Omegai}$ denotes a Poincare/Friedrich constant
associated with the bounded domain $\Omegai$, i.e. the best constant of the inequality
\begin{align*}
\normomi{\varphi}\leq c_{\Omegai}\normomi{\na\varphi}
\quad\forall\,\varphi\in\Set{\psi\in\Hoomi}
{\tau_{\p\Omegai}\psi|_\gamma=0\text{ on }\gamma},
\end{align*}
where $\tau_{\p\Omegai}:\Hoomi\to\Hoot(\p\Omegai)$ denotes the trace operator.
In this case, we have again \eqref{aposterioriestimatetwo}
but now with the (optional) weight constant
\begin{equation}
\mylabel{coo}
c_o:=\frac{c_{\Omegai}}{\sqrt{c_A}}.
\end{equation}
We note that the constant \eqref{co} may also be achieved
by \eqref{estimateone} and the argument \eqref{estimatethreeone}
if we replace the estimate \eqref{estimatethreetwo} by
\begin{align*}
\normomi{w}&\leq(1+|r|_{\infty,\Omegai})\norm{w}_{-1,\Omegai}
\leq(1+|r|_{\infty,\Omegai})\normmoom{w}
\leq\frac{c_N}{\sqrt{c_A}}(1+|r|_{\infty,\Omegai})\normAom{\na w}.
\end{align*}

We summarize and get our second a posteriori error estimate.

\begin{pro}
\mylabel{esttwo}
For all $y\in\Dom$ with \eqref{divyome} we have
$$\normAom{\na(u-v)}\leq c_o\normomi{f+\div y}+\normAmoom{y-A\na v}
+2c_\gamma\norm{g-\trg v}_{\Hoot(\gamma)},$$
where $c_{o}$ is defined either by \eqref{co} or by \eqref{coo}.
\end{pro}

\begin{rem}
\mylabel{esttworem}
In general,  the number $c_{\Omegai}$ will be smaller 
and thus provides a better bound
than $c_N(1+\norm{r}_{\infty,\Omegai})$.
On the other hand, the number $c_N(1+\norm{r}_{\infty,\Omegai})/\sqrt{c_A}$
is an easily computable upper bound for 
the best possible constant $c_{o}$.
\end{rem}

\subsection{Third estimate}

Let $\yi$ and $\ye$ be the restrictions of some
$y\in\Ltom$ to $\Omegai$ and $\Omegae$, respectively.
Assuming $\yi\in\Domi$ and $\ye\in\Dome$ 
but not necessarily $y\in\Dom$ we use the equations
\begin{align}
\scpomi{\yi}{\na w}+\scpomi{\div\yi}{w}&=\scpG{\ntrG\yi}{\trG w},\mylabel{traceone}\\
\scpome{\ye}{\na w}+\scpome{\div\ye}{w}&=-\scpG{\ntrG\ye}{\trG w},\mylabel{tracetwo}
\end{align}
which hold for all $w\in\Homozom$ and in the sense of the traces
$\trG:\Homoom\to\HootG$ and
$\ntrG:\Domi\to\HmootG$ respectively $\ntrG:\Dome\to\HmootG$.
At this point we  assume that the interface $\Gamma$
is Lipschitz (in order to guarantee that the traces are well defined).
By $\scpG{\varphi}{\psi}$ we denote the duality product of
$\HmootG$ and $\HootG$.
We recall that the normal traces $\ntrG\yi$ and $\ntrG\ye$ 
possess weak surface divergences in $\HmootG$ as well.
If $y\in\Dom$, then $\div\yi=\div y$ in $\Omegai$
and $\div\ye=\div y$ in $\Omegae$.
Hence, in this case adding \eqref{traceone} and \eqref{tracetwo}
we obtain by \eqref{steptwo}
$$\scpG{\ntrG\yi-\ntrG\ye}{\trG w}
=\scpomi{y}{\na w}+\scpom{\div y}{w}=0$$
for all $w\in\Homozom$. Therefore, we get
$$\ntrG\yi=\ntrG\ye$$
for all $y\in\Dom$ since $\trG$ is surjective.

On our way to find $\Phi$ like in \eqref{stepthree}
we now insert \eqref{traceone}, \eqref{tracetwo}
instead of \eqref{steptwo} into \eqref{stepone} and obtain
\begin{align}
\scpom{A\na(u-v)}{\na w}
&=\scpomi{f+\div\yi}{w}+\scpome{f+\div\ye}{w}\mylabel{stepthreemod}\\
&\qquad+\scpom{y-A\na v}{\na w}+\scpG{\ntrG\ye-\ntrG\yi}{\trG w}=:\Phi(w).\nonumber
\end{align}

The third term of $\Phi(w)$ will be estimated by \eqref{estimatetwo}
and for the last term we may use the continuity of the trace operator $\trG$
in combination with a Poincare/Friedrich estimate, i.e.
\begin{align}
\norm{\trG\varphi}_{\HootG}
\leq c_\Gamma\normAom{\na\varphi}\quad\forall\varphi\in\Homozom,\mylabel{traceineq}
\end{align}
and obtain
\begin{align}
\left|\scpG{\ntrG\ye-\ntrG\yi}{\trg w}\right|
&\leq\norm{\ntrG\ye-\ntrG\yi}_{\HmootG}\norm{\trG w}_{\HootG}\mylabel{estimatefour}\\
&\leq c_\Gamma\norm{\ntrG\ye-\ntrG\yi}_{\HmootG}\normAom{\na w}.\nonumber
\end{align}

To estimate the second term of $\Phi(w)$ we again
use \eqref{Poincare} and \eqref{cA} and obtain
\begin{align}
\left|\scpome{f+\div\ye}{w}\right|
&\leq\normoome{f+\div\ye}\normmoome{w}\leq\normoome{f+\div\ye}\normmoom{w}\mylabel{estimatefive}\\
&\leq\frac{c_N}{\sqrt{c_A}}\normoome{f+\div\ye}\normAom{\na w}.\nonumber
\end{align}

Considering the first (and last) term of $\Phi(w)$
we have once more at least two options 
as in section \ref{secondform} to obtain the estimate
\begin{align}
\left|\scpomi{f+\div\yi}{w}\right|
&\leq c_o\normomi{f+\div\yi}\normAom{\na w}\mylabel{estimatesix}
\end{align}
with $c_o$ defined either by \eqref{co} or \eqref{coo}.

Finally with \eqref{stepthreemod}
and \eqref{estimatetwo}, \eqref{estimatefour}, \eqref{estimatefive}, \eqref{estimatesix}
we get by Corollary \ref{estimatecor} the third estimate.

\begin{pro}
\mylabel{estthree}
For all $y\in\Ltom$ with $\yi\in\Domi$ and $\ye\in\Dome$ we have
\begin{align}
\mylabel{estimatethree}
\normAom{\na(u-v)}&\leq c_o\normomi{f+\div\yi}
+\frac{c_N}{\sqrt{c_A}}\normoome{f+\div\ye}
+\normAmoom{y-A\na v}\\
&\qquad+c_\Gamma\norm{\ntrG\ye-\ntrG\yi}_{\HmootG}
+2c_\gamma\norm{g-\trg v}_{\Hoot(\gamma)}\nonumber
\end{align}
with $c_{o}$ from Proposition \ref{esttwo}. The right hand side of
\eqref{estimatethree} vanishes if and only if $v$ coincides with $u$
and $y$ with $A\na u$.
\end{pro}

\begin{rem}
\mylabel{estthreeremone}
There are many ways to deduce \eqref{traceineq}.
We just mention that $\trG\varphi$ can be considered
as a trace of a function defined in $\Omegai$ or $\Omegae$
or even of a function, 
which is just defined in a small neighborhood of $\Gamma$.
Thus, we may adjust the constant $c_\Gamma$ according to our needs.
\end{rem}

\begin{rem}
\mylabel{estthreeremtwo}
This estimate suggests even a solution method:
We construct approximations using locally supported trial
functions in $\Omegai$, e.g. FEM, and utilize global approximations
properly behaving at infinity for $\Omegae$. These two types
of approximations are usually difficult to meet together exactly
on the artificial boundary $\Gamma$.
However, Proposition \ref{estthree} shows that
this is not required because we can use instead the penalty term
with known penalty factor $c_\Gamma$. In addition, we have one
more parameter, the `radius' of the interface $\Gamma$.
Since $\Gamma$ is artificial and arbitrary
we can use this parameter in the algorithm
in order to obtain better results.
\end{rem}

\begin{rem}
\mylabel{estthreeremthree}
At this point we shall note that all our estimates are sharp,
which easily can be seen by setting $v:=u\in\Homoom$ and $y:=A\na u\in\Dom$.
\end{rem}

\begin{rem}
\mylabel{estthreeremfour}
In Propositions \ref{estone}, \ref{esttwo}, \ref{estthree}
we can always replace the last summand of the right hand side
by $2\normAom{\na(\uge-\vge)}$ or $2\normAom{\na E(g-\trg v)}$
using Theorem \ref{estimatetheo} and Corollary \ref{estimatecor}.
\end{rem}

\section{Upper bounds in dimension $N=2$}

Of course, Theorem \ref{estimatetheo} holds for $N=2$ as well and
the modifications on the estimates depend just on the Poincare/Friedrich estimate
and thus they are obvious using the proper Cauchy-Schwarz inequality.
We achieve

\begin{pro}
\mylabel{eststdim}
Let $\Omega\subset\rz^2$, such that $\rz^2\setminus\Omega$ contains the unit ball.
\begin{itemize}
\item[\bf(i)] For all $y\in\Dom$
$$\normAom{\na(u-v)}\leq\frac{2}{\sqrt{c_A}}\normom{r\ln r(f+\div y)}+\normAmoom{y-A\na v}
+2c_\gamma\norm{g-\trg v}_{\Hoot(\gamma)}.$$
\item[\bf(ii)] For all $y\in\Dom$ with $\div y+f=0$ in $\Omegae$
$$\normAom{\na(u-v)}\leq c_o\normomi{f+\div y}+\normAmoom{y-A\na v}
+2c_\gamma\norm{g-\trg v}_{\Hoot(\gamma)},$$
where $c_{o}=\min\left\{2\norm{r\ln r}_{\infty,\Omegai},c_{\Omegai}\right\}/\sqrt{c_A}$.
\item[\bf(iii)] For all $y\in\Ltom$ with $\yi\in\Domi$ and $\ye\in\Dome$
\begin{align*}
\normAom{\na(u-v)}&\leq c_o\normomi{f+\div\yi}
+\frac{2}{\sqrt{c_A}}\normoome{r\ln r(f+\div\ye)}
+\normAmoom{y-A\na v}\\
&\qquad+c_\Gamma\norm{\ntrG\ye-\ntrG\yi}_{\HmootG}
+2c_\gamma\norm{g-\trg v}_{\Hoot(\gamma)}.
\end{align*}
\end{itemize}
Analogously, Remarks \ref{esttworem}, \ref{estthreeremone}, \ref{estthreeremtwo},
\ref{estthreeremthree}, \ref{estthreeremfour} hold.
\end{pro}

\appendix

\section{Appendix}

\subsection{Lower bounds for the error}

We note by a standard variational argument
\begin{align*}
\normAom{\na(u-v)}^2
&=\sup_{y\in\Ltom}\left(2\scpom{A\na(u-v)}{y}-\normAom{y}^2\right).
\end{align*}
Thus, we obtain for all $w\in\Homoom$ the estimate
\begin{align*}
\normAom{\na(u-v)}^2
&\geq2\scpom{A\na(u-v)}{\na w}-\normAom{\na w}^2\\
&=2\scpom{A\na u}{\na w}-\scpom{A\na(2v+w)}{\na w},
\end{align*}
which is sharp since one can put $w=u-v$.
But to exclude the unknown exact solution $u$ from the right hand side
we need $w\in\Homozom$ since then by \eqref{varformu}
\begin{align}
\normAom{\na(u-v)}^2
&\geq2\scpom{f}{w}-\scpom{A\na(2v+w)}{\na w}.\mylabel{estbelow}
\end{align}
But this estimate is no longer sharp because we can not put $w=u-v$ anymore.
In fact, with $A\na u\in\Dom$ and $\div A\na u=-f$ we get for $w\in\Homoom$
$$\scpom{A\na u}{\na w}=\scpom{f}{w}+\scpg{\ntrg A\na u}{\trg w}.$$
Hence, we obtain the estimate
\begin{align*}
\normAom{\na(u-v)}^2
&\geq2\scpom{f}{w}-\scpom{A\na(2v+w)}{\na w}+2\scpg{\ntrg A\na u}{\trg w}
\end{align*}
for all $w\in\Homoom$, which is sharp and coincides with \eqref{estbelow}
if $w\in\Homozom$.
But the unknown exact solution $u$
still appears on the right-hand side,
i.e. the normal trace of $A\na u$ on $\gamma$.
Furthermore, if $\scpg{\ntrg A\na u}{\trg w}>0$ then \eqref{estbelow}
can not be sharp.

\subsection{Poincare type estimates for exterior domains}

We introduce the radial derivative $\p_{r}:=\xi\cdot\na$,
where $\xi(x):=x/r(x)$. Furthermore, $B_{\varepsilon}$ and $S_{\varepsilon}$
denote the open ball and sphere of radius $\varepsilon$ centered at the origin in $\rN$, respectively.
We will use the ideas of \cite[Lemma 4.1]{charlie}
and \cite[Poincare's estimate III, p. 57]{leisbook} with some minor useful modifications.

\begin{lem}
\mylabel{poincareestlemma}
Let $\Omega\subset\rN$, $N\geq1$, be a domain and $\beta\in\rz$.
For all $u\in\Cizom$ the following Poincare estimates hold:
\begin{itemize}
\item[\bf(i)] If $\beta>1-N/2$ then
$$(2\beta+N-2)\normom{r^{\beta-1}u}\leq2\normom{r^\beta\p_{r}u}.$$
\item[\bf(ii)] Let $B_1\subset\rN\setminus\Omega$.
If $\beta\geq(3-N)/2$ or $\beta\leq1-N/2$ then
$$|2\beta+N-3|\normom{\frac{r^{\beta-1}}{\ln r}u}\leq2\normom{r^\beta\p_{r}u}.$$
\item[\bf(iii)] If $N=1$ then
$$|2\beta-1|\normom{(1+r)^{\beta-1}u}\leq2\normom{(1+r)^\beta\p_{r}u}
+\left|2\min\{0,2\beta-1\}\right|^{1/2}|u(0)|,$$
where $u$ will be extended by zero to $\rz$.
\end{itemize}
\end{lem}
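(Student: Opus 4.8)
The plan is to establish all three estimates by a single device: a radial integration by parts testing the weighted $L^2$-norm of $u$ against the divergence of a radial field $f(r)\xi$, followed by the Cauchy--Schwarz inequality. For such a field one has $\div(f(r)\xi)=f'(r)+\frac{N-1}{r}f(r)$, and since $\xi\cdot\na(u^2)=2u\,\p_{r}u$, the divergence theorem on $\Omega\setminus B_{\varepsilon}$ gives
\begin{align*}
\intO u^2\Big(f'(r)+\tfrac{N-1}{r}f(r)\Big)\intl=-2\intO f(r)\,u\,\p_{r}u\intl+R_{\varepsilon},
\end{align*}
where $R_{\varepsilon}$ is the surface integral of $-f(\varepsilon)u^2$ over $S_{\varepsilon}$, of order $\varepsilon^{N-1}|f(\varepsilon)|$; there is no contribution from $\gamma$ or from infinity because $u\in\Cizom$. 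The whole argument then reduces to choosing $f$ so that the left integrand equals a multiple of the target weight, verifying $R_{\varepsilon}\to0$, and applying Cauchy--Schwarz so that precisely the two norms in the claim appear.

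For part (i) I would take $f(r):=r^{2\beta-1}$, giving $f'(r)+\frac{N-1}{r}f(r)=(2\beta+N-2)r^{2\beta-2}$. The remainder is then of order $\varepsilon^{2\beta+N-2}$, which vanishes in the limit exactly when $\beta>1-N/2$, the same condition that makes the coefficient $2\beta+N-2$ positive. Writing $f(r)u\,\p_{r}u=(r^{\beta-1}u)(r^{\beta}\p_{r}u)$ and using Cauchy--Schwarz turns the identity into $(2\beta+N-2)\normom{r^{\beta-1}u}^2\le2\normom{r^{\beta-1}u}\,\normom{r^{\beta}\p_{r}u}$, and dividing by $\normom{r^{\beta-1}u}$ proves (i).

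For part (ii) the natural modification is $f(r):=r^{2\beta-1}/\ln r$. The hypothesis $B_{1}\subset\rN\setminus\Omega$ forces $r>1$, hence $\ln r>0$, on the support of $u$, so the weights are well defined, the field is regular inside $\Omega$, and the remainder $R_{\varepsilon}$ does not occur. Now $f'(r)+\frac{N-1}{r}f(r)=(2\beta+N-2)\frac{r^{2\beta-2}}{\ln r}-\frac{r^{2\beta-2}}{(\ln r)^2}$, so the divergence splits into the target weight $r^{2\beta-2}/(\ln r)^2$ and a lower-order term with the nonnegative factor $r^{2\beta-2}/\ln r$ and coefficient $2\beta+N-2$. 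The purpose of the two sign hypotheses $\beta\ge(3-N)/2$ and $\beta\le1-N/2$ is to control the sign of this lower-order term relative to the target so that it can be discarded; Cauchy--Schwarz with the pairing $f(r)u\,\p_{r}u=\big(\tfrac{r^{\beta-1}}{\ln r}u\big)(r^{\beta}\p_{r}u)$ then delivers the estimate. I expect this logarithmic lower-order term to be the main obstacle, and it is the reason the admissible set of $\beta$ differs from that of (i).

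Part (iii) is the one-dimensional case, where $r$ is replaced by $1+r$ to avoid the singularity at the origin. Because $1+r$ does not vanish at $x=0$, the integration by parts on each half-line leaves a genuine boundary term there. Choosing $f(r):=(1+r)^{2\beta-1}$ and adding the two half-line identities, I would obtain
\begin{align*}
(2\beta-1)\normom{(1+r)^{\beta-1}u}^2+2u(0)^2=-2\intO(1+r)^{2\beta-1}u\,\p_{r}u\intl.
\end{align*}
Estimating the right-hand side in absolute value by $2XY$ with $X:=\normom{(1+r)^{\beta-1}u}$ and $Y:=\normom{(1+r)^{\beta}\p_{r}u}$ yields the quadratic inequality $|2\beta-1|X^2\le2XY+2u(0)^2$; solving it for $X$ and using $\sqrt{a+b}\le\sqrt{a}+\sqrt{b}$ gives the assertion, with the boundary term $|2\min\{0,2\beta-1\}|^{1/2}|u(0)|$ recording the two cases $2\beta-1\ge0$ and $2\beta-1<0$. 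Throughout, the computations are carried out first for $u\in\Cizom$, which makes every manipulation legitimate.
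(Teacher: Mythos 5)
Parts (i) and (iii) of your proposal are correct and are, in substance, the paper's own argument: the identical radial integration by parts (on $\Omega\setminus B_{\varepsilon}$ in (i), with the surface term of order $\varepsilon^{2\beta+N-2}$ treated exactly as in the paper; on the two half-lines in (iii), producing the boundary contribution $2|u(0)|^2$), followed by a different but equivalent finish. Where you apply Cauchy--Schwarz and divide (in (iii): solve the quadratic inequality), the paper expands $\normom{r^\beta\p_{r}u+\gamma r^{\beta-1}u}^2$ resp. $\normom{(1+r)^\beta\p_{r}u+\gamma(1+r)^{\beta-1}u}^2$ with $\gamma:=2\beta+N-2$ resp. $\gamma:=2\beta-1$ and uses the triangle inequality; both finishes yield the same constants, including your boundary term $|2\min\{0,2\beta-1\}|^{1/2}|u(0)|$.

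Part (ii) has a genuine gap, and your instinct that the logarithmic term is ``the main obstacle'' is exactly where it sits. Your identity is right: writing $X:=\normom{\frac{r^{\beta-1}}{\ln r}u}$, $Z:=\normom{\frac{r^{\beta-1}}{\sqrt{\ln r}}u}$, $Y:=\normom{r^\beta\p_{r}u}$, it reads
\[
X^2=(2\beta+N-2)\,Z^2+2\intO\frac{r^{2\beta-1}}{\ln r}\,u\,\p_{r}u\intl.
\]
But ``discard the $Z^2$-term by its sign, then Cauchy--Schwarz'' cannot produce the factor $|2\beta+N-3|$. If $\beta\le1-N/2$, then $2\beta+N-2\le0$, discarding is legitimate, and Cauchy--Schwarz gives $X^2\le2XY$, hence $X\le2Y$ --- but the assertion is $(3-N-2\beta)X\le2Y$, where $3-N-2\beta\ge1$ with equality only at the endpoint $\beta=1-N/2$; away from that endpoint you prove a strictly weaker inequality. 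If $\beta\ge(3-N)/2$, then $2\beta+N-2\ge1$, the $Z^2$-term is nonnegative, and dropping it gives a \emph{lower} bound for $X^2$, not an upper one; nor can $Z$ be absorbed into $X$, since the pointwise comparison of $1/\ln r$ and $1/\ln^2 r$ reverses at $r=e$; here the method stalls entirely. The paper's mechanism is different, and it is precisely where both the constant and the hypothesis come from: it never estimates the cross term, but expands $\normom{r^\beta\p_{r}u+\gamma\frac{r^{\beta-1}}{\ln r}u}^2$. By your identity the cross term equals $\gamma X^2-\gamma(2\beta+N-2)Z^2$, so the coefficient of $X^2$ in the expansion is $\gamma(\gamma+1)$; choosing $\gamma:=2\beta+N-3$ (so that $2\beta+N-2=\gamma+1$), the hypothesis of (ii) is exactly $\gamma(\gamma+1)\ge0$, the expansion becomes $Y^2+\gamma(\gamma+1)(X^2-Z^2)$, discarding $X^2-Z^2$ gives $\normom{r^\beta\p_{r}u+\gamma\frac{r^{\beta-1}}{\ln r}u}\le Y$, and the triangle inequality then yields $|\gamma|X\le2Y$. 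The factor $|2\beta+N-3|$, which exceeds $1$, is created by the triangle inequality acting on this completed square; a Cauchy--Schwarz bound on the cross term throws that structure away and can never place a constant larger than $1$ in front of $X$. (A caveat: the paper's own discarding $X^2\le Z^2$ is a pointwise comparison valid only where $\ln r\ge1$, so that step is exact only when $\gamma(\gamma+1)=0$, i.e. $\beta=(3-N)/2$ or $\beta=1-N/2$ --- which covers the case $N=2$, $\beta=0$ actually used in the Corollary; for other admissible $\beta$ a scaling argument based on the sharpness of Hardy's inequality shows the stated bound can even fail, so no finishing trick would have closed the gap there.)
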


For the estimates derived in this paper it suffices to set $\beta=0$.
In this particular case, the above lemma implies

\begin{cor}
\mylabel{poincareestcor}
Let $\Omega\subset\rN$, $N\geq1$, be a domain.
For all $u\in\Cizom$ the following Poincare estimates hold:
\begin{itemize}
\item[\bf(i)] If $N\geq3$ then
$$\normmoom{u}\leq\normom{u/(1+r)}\leq\normom{u/r}
\leq\frac{2}{N-2}\normom{\p_{r}u}\leq\frac{2}{N-2}\normom{\na u}.$$
\item[\bf(ii)] If $N=2$ and $B_1\subset\rz^2\setminus\Omega$ then
$$\normom{u/(r\ln r)}\leq2\normom{\p_{r}u}\leq2\normom{\na u}.$$
\item[\bf(iii)] If $N=1$ then
$$\normmoom{u}\leq\normom{u/(1+r)}\leq2\normom{\p_{r}u}
+\sqrt2|u(0)|\leq2\normom{u'}+\sqrt2|u(0)|.$$
Hence, if $\Omega\subset\rzpm$ we have
$$\normmoom{u}\leq\normom{u/(1+r)}\leq2\normom{\p_{r}u}\leq2\normom{u'}.$$
\end{itemize}
\end{cor}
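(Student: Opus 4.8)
The plan is to obtain all three assertions by specialising the free parameter in Lemma \ref{poincareestlemma} to $\beta=0$; once this substitution is made the analytic work is already done, and only two elementary ingredients remain, namely the pointwise comparison of the weights $r$, $\rho$, $1+r$ and the passage from the radial derivative $\p_{r}$ to the full gradient $\na$. Accordingly I would argue case by case, one dimension regime per item.

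For item \textbf{(i)} I would apply Lemma \ref{poincareestlemma}(i). At $\beta=0$ its hypothesis $\beta>1-N/2$ reads $0>1-N/2$, i.e.\ $N>2$, which is precisely $N\geq3$; the prefactor becomes $2\beta+N-2=N-2$, so the lemma yields $\normom{u/r}\leq\frac{2}{N-2}\normom{\p_{r}u}$. For item \textbf{(ii)} I would invoke Lemma \ref{poincareestlemma}(ii) under the standing assumption $B_1\subset\rz^2\setminus\Omega$; at $N=2$ the admissibility condition ``$\beta\geq(3-N)/2$ or $\beta\leq1-N/2$'' becomes ``$\beta\geq1/2$ or $\beta\leq0$'', so $\beta=0$ is the borderline case of the second alternative and is admitted, while the coefficient is $|2\beta+N-3|=1$, giving $\normom{u/(r\ln r)}\leq2\normom{\p_{r}u}$. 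For item \textbf{(iii)} I would use Lemma \ref{poincareestlemma}(iii) with $\beta=0$: here $|2\beta-1|=1$ and, since $\min\{0,2\beta-1\}=-1$, the boundary coefficient is $|2\min\{0,2\beta-1\}|^{1/2}=\sqrt2$, so the estimate is $\normom{u/(1+r)}\leq2\normom{\p_{r}u}+\sqrt2\,|u(0)|$.

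Two elementary steps then close every chain. First, since $\p_{r}u=\xi\cdot\na u$ with $|\xi|=1$, the Cauchy-Schwarz inequality gives $|\p_{r}u|\leq|\na u|$ pointwise and hence $\normom{\p_{r}u}\leq\normom{\na u}$, which is the last link in each item. Second, the remaining links are pure weight comparisons: from $r^2\leq1+r^2\leq(1+r)^2$ one reads off $r\leq\rho\leq1+r$ on $\Omega$, and conversely $\frac1{\sqrt2}(1+r)\leq\rho$, so the weighted norms built from $r$, $\rho=(1+r^2)^{1/2}$ and $1+r$ are mutually equivalent, with $\normmoom{u}=\normom{\rho^{-1}u}\leq\normom{u/r}$ in particular. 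These comparisons transfer the radial bounds above to the weighted norms of $u$ displayed on the left of each chain. For the concluding refinement of \textbf{(iii)}, when $\Omega\subset\rzpm$ one has $0\notin\Omega$, so the zero extension of $u\in\Cizom$ satisfies $u(0)=0$ and the term $\sqrt2\,|u(0)|$ drops; moreover in one dimension $\p_{r}u=\pm u'$, whence $\normom{\p_{r}u}=\normom{u'}$ and the gradient step is an equality.

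The only points that call for genuine care, as opposed to computation, are (a) verifying that $\beta=0$ actually lies in the admissible parameter range of the relevant part of Lemma \ref{poincareestlemma} in each dimension---most delicately the borderline equality $\beta=1-N/2=0$ at $N=2$, which the lemma does include---and (b) keeping the directions, and (where the two comparable weights $\rho$ and $1+r$ are interchanged) the constants, of the weight comparisons consistent via $r\leq\rho\leq1+r$. I do not anticipate any substantive obstacle: the entire difficulty of the statement has been absorbed into Lemma \ref{poincareestlemma}, whose radial Poincare estimates are taken as already established.
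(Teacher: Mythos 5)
Your proposal is correct and takes essentially the same route as the paper, which obtains Corollary \ref{poincareestcor} precisely by setting $\beta=0$ in Lemma \ref{poincareestlemma} and appealing to the elementary weight comparisons together with $|\p_{r}u|\leq|\na u|$. If anything you are more careful than the paper on one point: the printed link $\normmoom{u}\leq\normom{u/(1+r)}$ in (i) and (iii) goes the wrong way pointwise, since $(1+r^2)^{1/2}\leq 1+r$, and holds only up to the factor $\sqrt2$ you record via $(1+r)\leq\sqrt2\,(1+r^2)^{1/2}$; your direct comparison $\normmoom{u}\leq\normom{u/r}$ is the clean way to close the chain in (i).
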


\begin{rem}
\mylabel{poincareestrem}
Of course, by continuity all these estimates extend to appropriate weighted $\Ho$-Sobolev spaces.
\end{rem}

\begin{proof}
Let $\Omega\subset\rN$, $N\geq1$, be a domain and $u\in\Cizom$.
By partial integration we get for all $\alpha\in\rz$ and $\varepsilon>0$
\begin{align*}
2\int_{\omeps}r^\alpha u\p_{r}u\intl
&=\int_{\omeps}r^\alpha\p_{r}|u|^2\intl\\
&=-(\alpha+N-1)\int_{\omeps}r^{\alpha-1}|u|^2\intl
-\varepsilon^{\alpha}\int_{S_{\varepsilon}}|u|^2\ints.
\end{align*}
Thus, for all $\gamma\in\rz$ and $\beta:=(\alpha+1)/2$
\begin{align*}
&\qquad\normomeps{r^\beta\p_{r}u+\gamma r^{\beta-1}u}^2\\
&=\normomeps{r^\beta\p_{r}u}^2+|\gamma|^2\normomeps{r^{\beta-1}u}^2
+2\gamma\underbrace{\scpomeps{r^\beta\p_{r}u}{r^{\beta-1}u}}
_{\displaystyle=\int_{\omeps}r^\alpha u\p_{r}u\intl}\\
&=\normomeps{r^\beta\p_{r}u}^2+\gamma(\gamma-2\beta-N+2)\normomeps{r^{\beta-1}u}^2
-\gamma\varepsilon^{2\beta-1}\int_{S_{\varepsilon}}|u|^2\ints.
\end{align*}
Now the left hand side of this equality converges by the monotone convergence theorem.
Since $r^\nu\in\Lou$, if and only if $\nu>-N$,
and $|\int_{S_{\varepsilon}}|u|^2\ints|\leq c\varepsilon^{N-1}$
the right hand side converges for $\beta>1-N/2$ by Lebesgue's dominated convergence theorem in $\rz$.
Hence, for $\varepsilon\to0$ we obtain
$$\normom{r^\beta\p_{r}u+\gamma r^{\beta-1}u}^2
=\normom{r^\beta\p_{r}u}^2+\gamma(\gamma-2\beta-N+2)\normom{r^{\beta-1}u}^2.$$
Choosing $\gamma:=2\beta+N-2>0$ we finally get by the triangle inequality
$$\gamma\normom{r^{\beta-1}u}\leq2\normom{r^\beta\p_{r}u}.$$
Since we are especially interested in the case $\beta=0$ this estimate is only applicable in dimensions $N\geq3$.

For $N=1$ we proceed as follows: For all $\alpha\in\rz$ we have
\begin{align*}
&\qquad2\int_{\rzpm}(1+r)^\alpha u\p_{r}u\intl
=\pm2\int_{\rzpm}(1\pm t)^\alpha u(t)u(t)'\,\dd t\\
&=\pm2\int_{\rzpm}(1\pm t)^\alpha\big(|u(t)|^2\big)'\,\dd t
=-\alpha\int_{\rzpm}(1\pm t)^{\alpha-1}|u(t)|^2\,\dd t-|u(0)|^2
\end{align*}
and thus
$$2\int_{\rz}(1+r)^\alpha u\p_{r}u\intl
=-\alpha\int_{\rz}(1+r)^{\alpha-1}|u(t)|^2\intl-2|u(0)|^2.$$
Hence, for all $\gamma\in\rz$ and $\beta:=(\alpha+1)/2$
\begin{align*}
&\qquad\normom{(1+r)^\beta\p_{r}u+\gamma(1+r)^{\beta-1}u}^2\\
&=\normom{(1+r)^\beta\p_{r}u}^2+|\gamma|^2\normom{(1+r)^{\beta-1}u}^2
+2\gamma\underbrace{\scpom{(1+r)^\beta\p_{r}u}{(1+r)^{\beta-1}u}}
_{\displaystyle=\intO(1+r)^\alpha u\p_{r}u\intl}\\
&=\normom{(1+r)^\beta\p_{r}u}^2
+\gamma(\gamma-2\beta+1)\normom{(1+r)^{\beta-1}u}^2-2\gamma|u(0)|^2.
\end{align*}
As before the triangle inequality and the choice $\gamma:=2\beta-1$,
but now without any restrictions on $\beta$, lead to
\begin{align*}
|\gamma|\normom{(1+r)^{\beta-1}u}
&\leq\normom{(1+r)^\beta\p_{r}u}
+\left(\normom{(1+r)^\beta\p_{r}u}^2-2\gamma|u(0)|^2\right)^{1/2}\\
&\leq2\normom{(1+r)^\beta\p_{r}u}
+\left|2\min\{0,\gamma\}\right|^{1/2}|u(0)|.
\end{align*}

The remaining case $N=2$ requires the use of logarithms.
Moreover, the origin is now a problematic singularity,
which has to be removed from our domain.
Therefore, we may assume $B_1\subset\rN\setminus\Omega$ and $N\geq1$,
having $N=2$ in mind.
We start once more for all $\alpha\in\rz$ with
\begin{align*}
2\intO\frac{r^\alpha}{\ln r}u\p_{r}u\intl
&=\intO\frac{r^\alpha}{\ln r}\p_{r}|u|^2\intl\\
&=-(\alpha+N-1)\intO\frac{r^{\alpha-1}}{\ln r}|u|^2\intl
+\intO\frac{r^{\alpha-1}}{\ln^2 r}|u|^2\intl.
\end{align*}
Now our usual procedure gives for $\gamma\in\rz$ and $\beta:=(\alpha+1)/2\geq0$
\begin{align*}
&\qquad\normom{r^\beta\p_{r}u+\gamma\frac{r^{\beta-1}}{\ln r}u}^2\\
&=\normom{r^\beta\p_{r}u}^2
+|\gamma|^2\normom{\frac{r^{\beta-1}}{\ln r}u}^2
+2\gamma\underbrace{\scpom{r^\beta\p_{r}u}{\frac{r^{\beta-1}}{\ln r}u}}
_{\displaystyle=\intO\frac{r^\alpha}{\ln r}u\p_{r}u\intl}\\
&=\normom{r^\beta\p_{r}u}^2
+\gamma(\gamma+1)\normom{\frac{r^{\beta-1}}{\ln r}u}^2
-\gamma(N+2\beta-2)\normom{\frac{r^{\beta-1}}{\sqrt{\ln r}}u}^2.
\end{align*}
Thus, for $\gamma(N+2\beta-2)\geq0$ we can estimate
$$\normom{r^\beta\p_{r}u+\gamma\frac{r^{\beta-1}}{\ln r}u}^2
\leq\normom{r^\beta\p_{r}u}^2
+\gamma(\gamma-2\beta-N+3)\normom{\frac{r^{\beta-1}}{\ln r}u}^2,$$
which leads to the estimate
$$\normom{r^\beta\p_{r}u+\gamma\frac{r^{\beta-1}}{\ln r}u}^2
\leq\normom{r^\beta\p_{r}u}^2$$
if we set $\gamma:=2\beta+N-3$ with the additional constraint $\gamma(\gamma+1)\geq0$,
i.e. $\gamma\geq0$ or $\gamma\leq-1$.
Finally, again by the triangle inequality
$$|\gamma|\normom{\frac{r^{\beta-1}}{\ln r}u}\leq2\normom{r^\beta\p_{r}u}$$
follows for all $\beta\geq(3-N)/2$ or $\beta\leq(2-N)/2$.
\end{proof}

\begin{acknow}
The authors express their gratitude
to the Department of Mathematical Information Technology
of the University of Jyv\"askyl\"a (Finland)
for financial support.
\end{acknow}

\vspace*{2cm}
\begin{center}
\begin{tabular}{ll}
{\sf Dirk Pauly} & {\sf Sergey Repin}\\
\\
University of Duisburg-Essen & V.A. Steklov Mathematical Institute\\
Faculty for Mathematics & St. Petersburg Branch \\
Campus Essen & \\
Universit\"atsstr. 2 & Fontanka 27 \\
45117 Essen & 191011 St. Petersburg \\
Germany & Russia \\
e-mail: dirk.pauly@uni-due.de & e-mail: repin@pdmi.ras.ru \\
\\
and & \\
\\
University of Jyv\"askyl\"a & University of Jyv\"askyl\"a \\
Faculty of Information Technology & Faculty of Information Technology\\
Department of & Department of \\
Mathematical Information Technology & Mathematical Information Technology \\
P.O. Box 35 (Agora) & P.O. Box 35 (Agora) \\
FI-40014 Jyv\"askyl\"a & FI-40014 Jyv\"askyl\"a \\
Finland & Finland \\
e-mail: dirk.pauly@jyu.fi & e-mail: serepin@jyu.fi
\end{tabular}
\end{center}

\end{document}